\newcommand{\R}{\mathbb{R}}
\newcommand{\C}{\mathbb{C}}
\newcommand{\F}{\mathbb{F}}
\newcommand{\rk}{\mathrm{rk}}
\newcommand{\diag}{\mathrm{diag}}
\theoremstyle{definition}
\newtheorem{Lemma}{Lemma}
\newtheorem{Theorem}{Theorem}
\newtheorem{Remark}{Remark}
\newtheorem{Definition}{Definition}
\definecolor{Royalblue}{cmyk}{1,0.30,0.2,0.2}
\author{Giacomo Baggio, Augusto Ferrante%
\thanks{Giacomo Baggio is with the Dipartimento di Ingegneria dell'€™Informazione, Universit\`a di Padova, via Gradenigo, 6/B€" I-35131 Padova, Italy. E-mail: giacomo.baggio@studenti.unipd.it.
Augusto Ferrante is with the Dipartimento di Ingegneria dell€'Informazione, Universit\`a di Padova, via Gradenigo, 6/B€" I-35131 Padova, Italy. E-mail: augusto@dei.unipd.it.}}
\begin{document}

\title{On Minimal Spectral Factors  with Zeroes and Poles lying on Prescribed Regions}

\maketitle

\begin{abstract} 
In this paper, we consider a general discrete-time spectral factorization problem for rational matrix-valued functions.
We build on a recent result establishing existence of a spectral factor  whose zeroes and poles lie in any pair of prescribed regions of the complex plane featuring a geometry compatible with symplectic symmetry.
In this general setting, uniqueness of the spectral factor  is not guaranteed. It was, however, conjectured that if we further impose stochastic minimality, uniqueness can be recovered.
The main result of his paper is a proof of this conjecture.
\end{abstract}

\section{Introduction and problem definition}

Spectral factorization is a crucial problem of many areas of systems and control theory, from LQ optimal control theory \cite{Willems-1971} to filtering and estimation theory \cite{Lindquist-P-85-siam,Lindquist-P-91-jmsec}, to cite but a few. A seminal paper in spectral factorization theory is due to  Youla  \cite{Youla-1961}. In that paper, a  constructive procedure was established to  compute a stochastically minimal outer spectral factor of a given rational spectral density $\Phi(s)$ without requiring any additional system-theoretic assumption.  
In fact, Youla's only assumptions | that are clearly necessary for the existence of a spectral factor | are that the real rational spectrum $\Phi(s)$ is positive semi-definite in the points of the imaginary axis where it is finite and  features the {\em Hamiltonian paraconjugate symmetry}, i.e. $\Phi(s)= [\Phi(-s)]^\top$. Under these assumptions, Youla has established existence, and provided a procedure for construction, of a rational matrix function $W(s)$ | a {\em spectral factor} | analytic with its right-inverse in the open right half complex plane, such that  $\Phi(s)=[W(-s)]^\top W(s)$.

In \cite{BF-2015} a discrete-time counterpart of the Youla's result is established as a corollary of a much more general result that allows for the selection of the analyticity regions of the spectral factor  and  of its right-inverse. Remarkably, the latter feature is of key interest in stochastic realization and a-causal estimation theory, see, e.g., \cite{Colaneri-F-siam, Picci-P-94}.
The main result of \cite{BF-2015}, that may be viewed as the starting point for this note, may be described as follows.
Let $\Phi(z)$ be a real, rational, matrix-valued function.
Assume that $\Phi(z)$ is positive semi-definite in the points of the unit circle where it is finite and that  it features the {\em Symplectic paraconjugate symmetry}, i.e.
$\Phi(z)= [\Phi(1/z)]^\top$. 
Let $\mathscr{A}_p$ and $\mathscr{A}_z$ be regions of the extended complex plane $\overline{\C}$, compatible with symplectic structure (i.e. for each $z\in\overline{\C}$ with $|z|\neq 1$, exactly one element of the pair $(z,z^{-1})$ is in $\mathscr{A}_p$  and $z\not\in\mathscr{A}_p$ if $|z|= 1$; and the same holds for $\mathscr{A}_z$).
Then, there exists a rational matrix function $W(z)$
| a {\em spectral factor} | analytic in  $\mathscr{A}_p$ and with  right-inverse analytic in $\mathscr{A}_z$, such that $\Phi(z)=[W(1/z)]^\top W(z)$.
In the case when $\mathscr{A}_p$ and $\mathscr{A}_z$ coincide with the subset
of $\overline{\C}$ of the $z$ that are outside the closed unit disc, we get the discrete-time counterpart of the result of Youla.
There are, however,  many other interesting situations. For example in backward filtering, $\mathscr{A}_p$ is fixed by the system's dynamics  while $\mathscr{A}_z$ is the open unit disc.

From this general result, a very interesting question arises. In fact, when $\mathscr{A}_p=\mathscr{A}_z=\mathscr{A}$, it is not difficult to see that the corresponding spectral factor is essentially unique (i.e. unique up to multiplication on the left side by a constant orthogonal matrix): the key idea is that, starting from a reference spectral factor $W(z)$, a second spectral factor $W_1(z)$ must be of the form  $W_1(z)=Q(z)W(z)$ with $Q(z)$ being all-pass so that if $Q(z)$ has a pole in $p$, it necessarily has  a zero in $1/p$; therefore, for any non-constant $Q(z)$, either $W_1(z)$ or its right-inverse is no longer analytic in $\mathscr{A}$.
On the contrary, when $\mathscr{A}_p\neq\mathscr{A}_z$, we can easily obtain a spectral factor $W_1(z)$ with the prescribed analyticity properties by selecting an all-pass function $Q(z)$ featuring poles in $\overline{\C}\setminus \mathscr{A}_p$ and zeroes in  $\overline{\C}\setminus \mathscr{A}_z$.
Thus, there appears to be an inherent ambiguity in the choice of the spectral factor in this general case.
In this paper, we show that this is in not the case if we further impose that the spectral factor has minimal complexity as measured by its McMillan degree. In fact, we will show that, under this assumption, for any choice of the analyticity regions  $\mathscr{A}_p$ and $\mathscr{A}_z$, the spectral factor is essentially unique.
In the scalar case, this result is straightforward. In the general matricial case, however, a rational function can feature a pole and a zero in the same point so that the result appears to be quite difficult to derive and was left open as a conjecture in \cite{BF-2015}.
Our proof makes use of a very elegant and profound parametrization of rational all-pass functions established by Alpay and Gohberg in \cite{unitary2}.

{\noindent \bfseries Paper structure.} The paper is organized as follows: In \S II, we review some preliminary notions of rational matrix theory and we introduce some ancillary results. In \S III, we present our main theorem. Finally, in \S IV, we draw some concluding remarks and we list a number of possible future research directions.  

{\noindent \bfseries Notation.} In what follows, we write $G^\top$, $G^*$, $G^{-1}$, $G^{-R}$ for the transpose, Hermitian conjugate, inverse and right inverse of matrix $G$, respectively. As usual, $I_n$ is the $n\times n$ identity matrix and $\mathrm{diag}[a_1,\dots,a_n]$ stands for the matrix whose diagonal entries are $a_1,\dots,a_n$. We let $\mathbb{T}:=\{\,z\in\mathbb{C} \, :\, |z|=1\,\}$  and we denote by $\overline{\C}:=\C\cup \{\infty\}$ the extended complex plane. 
We denote by $\F[z]^{m\times n}$ and $\F(z)^{m\times n}$ the set of $m\times n$ polynomial and rational matrices with coefficients in the field $\F$ (we consider the two cases $\F=\R,\C$). A polynomial matrix $G(z)\in\F[z]^{m\times n}$ is said to be unimodular if it possesses a polynomial inverse (either left, right or both). Notably, a square polynomial matrix $G(z)$ is unimodular if and only if $\det G(z)$ is a constant. Given a rational matrix $G(z)\in\C(z)^{m\times n}$, we let $G^*(z):=[G(1/\overline{z})]^*$, where $\overline{z}$ is the complex conjugate of $z\in\C$, and we denote by $\rk(G)$ the normal rank of $G(z)$, i.e., the rank almost everywhere in $z\in\C$ of $G(z)$. The rational matrix $G(z)$ is said to be analytic in a region of the complex plane if all its entries are analytic in this region.  If  $\rk(G)=m$, then $G^{-R}(z)$ denotes a ``minimal''  right inverse of $G(z)$, i.e., a right inverse of $G(z)$ whose poles coincide with the zeroes of $G(z)$. Other standard notation and terminology is taken from \cite{BF-2015}.

\section{Preliminary results}

If $G(z)\in\C(z)^{m\times n}$, $\rk(G)=r\leq \min\{m,n\}$, by the Smith-McMillan Theorem \cite[Ch.6, \S 5]{Kailath}, there exist unimodular matrices $U(z)\in\C[z]^{m\times r}$ and $V(z)\in\C[z]^{r\times n}$ such that
\begin{align}\label{eq:smith-mcmillan-canonic-form}
	D(z):	& =		U(z)G(z)V(z)\nonumber \\
		 	& =		\diag\left[\frac{\varepsilon_1(z)}{\psi_1(z)},\frac{\varepsilon_2(z)}{\psi_2(z)},\dots,\frac{\varepsilon_r(z)}{\psi_r(z)}\right],
\end{align}
where $\varepsilon_1(z),\,\varepsilon_2(z),\,\dots,\,\varepsilon_r(z),\, \psi_1(z),\, \psi_2(z),\,\dots,\,\psi_r(z)\in\C[z]$ are monic polynomials satisfying the conditions: {(i)} $\varepsilon_i(z)$ and $\psi_i(z)$ are relatively prime, $i=1,2,\dots,r$, {(ii)} $\varepsilon_i(z)\mid \varepsilon_{i+1}(z)$ and $\psi_{i+1}(z)\mid \psi_{i}(z)$, $i=1,2,\dots,r-1$.\footnote{ If $p(z)$ and $q(z)$ are two polynomials in $\R[z]$, $p(z) \mid q(z)$ means that $p(z)$ divides $q(z)$.}

The rational matrix $D(z)$ in (\ref{eq:smith-mcmillan-canonic-form}) is known as the Smith-McMillan (SM, for short) canonical form of $G(z)$. The (finite) zeroes of $G(z)$ coincide with the zeroes of $\varepsilon_r(z)$ and the (finite) poles of $G(z)$ with the zeroes of $\psi_1(z)$. The degree of a pole and zero at $\alpha\in\C$ (denoted by $\delta_p(G;\alpha)$ and $\delta_z(G;\alpha)$, respectively) is equal to the sum of the degrees of the zero at $\alpha$ of all the $\psi_i(z)$ and of all the $\varepsilon_i(z)$, respectively.\footnote{If $\alpha=\infty$, then we can consider the transformation $z\mapsto\lambda^{-1}$ and the definition still applies by considering the degree of the pole/zero at $\lambda=0$ of $G(\lambda)$.} If $G(z)$ has no pole (zero) at $\alpha$, we let $\delta_p(G;\alpha)=0$ ($\delta_z(G;\alpha)=0$). Furthermore, if $p_1,\dots,p_h$ are the distinct poles (the pole at infinity included) of $G(z)$, the McMillan degree of $G(z)$ is defined as \cite[Ch.6, \S5]{Kailath}
\begin{equation}\label{eq:mcmillan-degree}
	\delta_M(G) := \sum_{i=1}^h\delta_p(G;p_i).
\end{equation} 

\begin{Definition} Let $G(z)\in\C(z)^{m\times n}$, $H(z)\in\C(z)^{n\times p}$ and $\alpha\in\overline{\C}$. We say that in the product $G(z)H(z)$ there is: 
\begin{enumerate}
\item a pole cancellation at $\alpha$ if $\delta_p(GH;\alpha)<\delta_p(G;\alpha)+\delta_p(H;\alpha)$; 
\item a zero cancellation at $\alpha$ if $\delta_z(GH;\alpha)<\delta_z(G;\alpha)+\delta_z(H;\alpha)$; 
\item a zero-pole cancellation at $\alpha$ if both conditions 1) and 2) are met. 
\end{enumerate}
\end{Definition}

\begin{Remark}
If $\mathrm{rk}(G)=\mathrm{rk}(H)=n$ then a zero or pole cancellation at $\alpha$ in the product $G(z)H(z)$ always corresponds to a zero-pole cancellation at $\alpha$. A proof of this fact is postponed to the end of this section (Lemma \ref{Lemma-zero-pole-cancellation}).

 However, in general, 1) and 2) are not equivalent. Indeed, consider for instance the product

\[
	G(z)H(z)=\begin{bmatrix}
						1 & -1
			\end{bmatrix}
			\begin{bmatrix}
						\frac{2z+3}{(z+1)(z+2)} \\
						\frac{1}{z+2}
			\end{bmatrix}
			=\frac{1}{z+1}
\]  
and observe that there is a pole cancellation at $-2$ which does not correspond to a zero-pole cancellation at $-2$. 
\end{Remark}

Two special classes of rational matrices are the following ones.

\begin{Definition}[Paraconjugate-Hermitian]
A rational matrix $G(z)\in\C(z)^{n\times n}$ is said to be paraconjugate-Hermitian if $G(z) = G^*(z)$.
\end{Definition}

\begin{Definition}[Paraconjugate-unitary or All-Pass]
A rational matrix $G(z)\in\C(z)^{r\times r}$ is said to be paraconjugate-unitary or  all-pass if 
\[
	G^*(z) G(z) = G(z) G^*(z)=I_n.
\]
\end{Definition}

\begin{Remark}
A real paraconjugate-Hermitian (paraconjugate-unitary) matrix $G(z)\in\R(z)^{r\times r}$ is said to be para-Hermitian (para-unitary, respectively). In addition, it is worth noting that a paraconjugate-Hermitian matrix is Hermitian in the ordinary sense upon the unit circle, while a  paraconjugate-unitary matrix is unitary in the ordinary sense upon the unit circle. 
\end{Remark}

A useful characterization of the class of paraconjugate-unitary matrices is provided by the following Lemma.

\begin{Lemma}\label{Lemma1}
Let $V(z)\in\mathbb{C}(z)^{r\times r}$, $\delta_M(V)=n$, and let $\{\alpha_i\}_{i=1}^n$ be the poles of $V(z)$ counted with multiplicity, then $V(z)$ is paraconjugate-unitary if and only if it can be written as
\begin{equation}\label{eq:paraconjugateunitary}
	V(z) = U U_1(z)U_2(z)\cdots U_n(z),
\end{equation}
with $U\in\C^{r\times r}$ being constant unitary and\footnote{We adopt the convention $\frac{1-\overline{\alpha}_iz}{z-\alpha_i}=:z$ if $\alpha_i=\infty$.}
\begin{align}\label{eq:Uifactorlemma}
	U_i(z):=I_r+\left(\frac{1-\overline{\alpha}_iz}{z-\alpha_i}-1\right)P_i, \quad \alpha_i\in\overline{\mathbb{C}}\setminus \mathbb{T},
\end{align}
with $P_i\in\mathbb{C}^{r\times r}$ being an orthogonal rank-one projection. Moreover, the product in the right-hand side of (\ref{eq:paraconjugateunitary}) is minimal, i.e., $\delta_M(V)=\delta_M(U_1)+\cdots+\delta_M(U_n)$.
\end{Lemma}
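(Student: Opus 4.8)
My plan is to prove the two implications separately and then read off minimality from the construction.

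For sufficiency (the ``if'' direction) I would first record the behaviour of a single factor. Writing $b_{\alpha}(z):=\frac{1-\overline{\alpha}z}{z-\alpha}$, a direct computation gives $b_\alpha^*(z)b_\alpha(z)=1$ and shows that $b_\alpha$ has a single pole at $\alpha$ and a single zero at $1/\overline{\alpha}$. Since $P_i$ is an orthogonal rank-one projection, we may rewrite $U_i(z)=(I_r-P_i)+b_{\alpha_i}(z)P_i$, which acts as the identity on $\ker P_i$ and as multiplication by $b_{\alpha_i}$ on $\mathrm{range}\,P_i$; using $P_i=P_i^*=P_i^2$ and $P_i(I_r-P_i)=0$ one then checks $U_i^*(z)U_i(z)=U_i(z)U_i^*(z)=I_r$, so each $U_i$ is paraconjugate-unitary with $\delta_M(U_i)=1$ (one pole at $\alpha_i$, one zero at $1/\overline{\alpha}_i$). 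As the constant $U$ is paraconjugate-unitary (there $G^*$ reduces to the ordinary Hermitian conjugate and $U^*U=I_r$) and a product of paraconjugate-unitary matrices is again paraconjugate-unitary (because $(AB)^*=B^*A^*$), the product $UU_1\cdots U_n$ is paraconjugate-unitary. This settles sufficiency irrespective of any cancellation.

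For necessity (the ``only if'' direction) I would argue by induction on $n=\delta_M(V)$. A preliminary observation, used throughout, is the pole--zero symmetry of an all-pass $V$: from $V^{-1}(z)=V^*(z)=[V(1/\overline{z})]^*$ it follows that $\alpha$ is a pole of $V$ of degree $d$ if and only if $1/\overline{\alpha}$ is a zero of $V$ of degree $d$; in particular all poles lie in $\overline{\C}\setminus\mathbb{T}$, since $V$ is unitary, hence bounded and invertible, on $\mathbb{T}$. The base case $n=0$ is immediate: $V$ has no poles, hence is constant, and a constant paraconjugate-unitary matrix is exactly a unitary $U$. For the inductive step I fix a pole $\alpha$ of $V$ and a minimal realization $V(z)=D+C(zI-A)^{-1}B$ with $A\in\C^{n\times n}$ (the case of a pole at $\infty$ being reduced to this by the substitution $z\mapsto 1/z$, consistently with the convention for $\alpha_i=\infty$). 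By minimality the all-pass condition is equivalent to the existence of a Hermitian invertible solution $H$ of the associated Stein (all-pass) equations relating $A,B,C,D$. Using an eigenvector of $A$ for the eigenvalue $\alpha$ together with $H$, I would build an orthogonal rank-one projection $P$ and the elementary factor $U(z)=I_r+(b_\alpha(z)-1)P$ so that $W(z):=V(z)U^{-1}(z)$ cancels exactly one order of the pole of $V$ at $\alpha$ and, by the symmetry above, has the pole of $U^{-1}$ at $1/\overline{\alpha}$ absorbed by the zero of $V$ there. One then checks that $W$ admits a realization of dimension $n-1$, so that $W$ is paraconjugate-unitary with $\delta_M(W)=n-1$; the inductive hypothesis gives $W=UU_1\cdots U_{n-1}$ and hence the claimed factorization of $V$, the poles of the factors being exactly the poles of $V$ counted with multiplicity. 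This is precisely the elementary Blaschke--Potapov extraction of \cite{unitary2}.

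Finally, minimality of the product is a by-product of the construction: at every step the McMillan degree drops by exactly one, so the factorization uses exactly $n$ factors and $\delta_M(V)=\sum_{i=1}^n\delta_M(U_i)=n$, i.e.\ no pole--zero cancellation occurs in (\ref{eq:paraconjugateunitary}). The main obstacle is the inductive step, namely producing the rank-one projection $P$ that simultaneously removes one unit of pole multiplicity at $\alpha$ and the mirror zero at $1/\overline{\alpha}$, and proving that the extracted $W$ is still all-pass of degree exactly $n-1$. This is where the signed Gramian $H$ and the minimal-realization bookkeeping do the essential work, and where care is required because, as the preceding Remark shows, pole and zero cancellations need not coincide for general rational matrices; the all-pass symmetry is exactly what forces the two cancellations to occur together.
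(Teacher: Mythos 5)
The paper does not actually prove this lemma: its ``proof'' is a one-line citation to Theorem~3.12 of \cite{unitary2}. Your outline reconstructs precisely the argument of that reference---elementary Blaschke--Potapov extraction driven by a minimal realization and the Hermitian invertible solution $H$ of the associated Stein equation---so the route is the intended one rather than a genuinely different one. The sufficiency direction is complete and correct as written (each $U_i=(I_r-P_i)+b_{\alpha_i}(z)P_i$ is all-pass, a product of all-pass functions is all-pass, and no minimality is needed there), the pole--zero symmetry $\delta_p(V;\alpha)=\delta_z(V;1/\overline{\alpha})$ and the exclusion of poles on $\mathbb{T}$ are argued correctly, and the minimality statement does follow once the inductive step is in place, since the McMillan degree drops by exactly one per extraction.

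The one substantive gap is the inductive step itself, which you explicitly defer: the construction of the orthogonal rank-one projection $P$ from an eigenvector $x$ of $A$ at the eigenvalue $\alpha$ together with $H$, and the verification that $W(z):=V(z)U^{-1}(z)$ is all-pass with $\delta_M(W)=n-1$. This is not a formality. One must show that the scalar used to normalize $P$ (essentially $x^*Hx$, or $\|Cx\|^2$ in the unitary-on-$\mathbb{T}$ normalization) is nonzero---this is where $|\alpha|\neq 1$ and minimality of the realization enter---and one must check that right-multiplication by $U^{-1}(z)$ removes one order of the pole at $\alpha$ without creating a new pole at $1/\overline{\alpha}$, which is exactly the pole/zero-cancellation subtlety you flag at the end. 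Since that computation is the entire mathematical content of the cited theorem, your proposal is best read as a correct proof skeleton whose load-bearing step is still outsourced to \cite{unitary2}, just as in the paper.
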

\begin{proof}
See \cite[Thm. 3.12]{unitary2}.
\end{proof}

\begin{Remark} Given any decomposition of a paraconjugate-unitary matrix $V(z)$ of the form in (\ref{eq:Uifactorlemma}), we have that:
\begin{enumerate}
\item Every factor $U_i(z)$ in (\ref{eq:Uifactorlemma}) is paraconjugate-unitary. Indeed, by direct computation:
\begin{align*}
	 U_i^*(z)U_i(z) & = I_r + \left(\frac{1-\alpha_iz^{-1}}{z^{-1}-\overline{\alpha}_i}-1\right)P_i + \left(\frac{1-\overline{\alpha}_iz}{z-\alpha_i}-1\right)P_i \, +\\
	 &\hspace{6.2cm}+ \left(\frac{1-\alpha_iz^{-1}}{z^{-1}-\overline{\alpha}_i}-1\right)\left(\frac{1-\overline{\alpha}_iz}{z-\alpha_i}-1\right)P_i\\
	& =	I_r-2P_i+\frac{1-\alpha_iz^{-1}}{z^{-1}-\overline{\alpha}_i}P_i+\frac{1-\overline{\alpha}_iz}{z-\alpha_i}P_i+2P_i-\frac{1-\alpha_iz^{-1}}{z^{-1}-\overline{\alpha}_i}P_i-\frac{1-\overline{\alpha}_iz}{z-\alpha_i}P_i\\
	& =I_r.
\end{align*}
\item Every pole at $\alpha_i$ of $V(z)$ of degree $d_i$ is accompanied by a zero of $V(z)$ at $1/\overline{\alpha}_i$ of the same degree. In particular, if $\alpha_i\neq \infty$, the SM canonical form of $U_i(z)$ in (\ref{eq:Uifactorlemma}) is given by
\begin{align*}
	\mathrm{diag}\left[\frac{1}{z-\alpha_i},1, \dots, 1, z-1/\overline{\alpha}_i\right].
\end{align*} 
\item Since the decomposition is minimal and $\delta_M(U_i)=1$, $i=1,\dots,n$, it follows that 
\[
	\delta_M(V)=\sum_{i=1}^n \delta_M(U_i)=n.
\]
\item Since the orthogonal rank-one projection $P_i$ in (\ref{eq:Uifactorlemma}) can be written as $P_i=v_i v_i^*$ with $v_i\in\C^r$ s.t. $\|v_i\|^2=v_i^*v_i=1$, it holds\footnote{In the derivation we exploit the fact that if $A\in\mathbb{C}^{n\times r}$ and $B\in\mathbb{C}^{r\times n}$, then $\det(I_n+AB)=\det(I_r+BA)$.}
\begin{align*}
	\det U_i(z) 	& =	\det\left[I_r+\left(\frac{1-\overline{\alpha}_iz}{z-\alpha_i}-1\right)P_i\right]\\
				& =	\det\left[I_r+\left(\frac{1-\overline{\alpha}_iz}{z-\alpha_i}-1\right)v_i v_i^*\right]\\
				& =	\det\left[1+v_i^*\left(\frac{1-\overline{\alpha}_iz}{z-\alpha_i}-1\right)v_i \right]\\
				& =	\frac{1-\overline{\alpha}_iz}{z-\alpha_i}.
\end{align*}

\end{enumerate}
\end{Remark}

For the sake of completeness, we state and prove below two additional instrumental Lemmata.

\begin{Lemma}\label{Lemma-zero-pole-cancellation}
Let $G(z)\in\C(z)^{n\times r}$ and $H(z)\in\C(z)^{r\times m}$ with $\rk(G)=\rk(H)=r$. If $G(z)H(z)$ has a zero or pole cancellation at $\alpha\in\C$, then $G(z)H(z)$ has a zero-pole cancellation at $\alpha$.
\end{Lemma}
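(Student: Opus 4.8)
The plan is to reduce everything to a purely \emph{local} computation at $\alpha$ and then to lean on a single determinantal identity: for a \emph{square} rational matrix $N(z)$ one has $\delta_z(N;\alpha)-\delta_p(N;\alpha)=\nu_\alpha(\det N)$, where $\nu_\alpha(\cdot)$ denotes the order of vanishing at $\alpha$ (negative for a pole). To make this precise I would work in the local ring $\mathcal{O}_\alpha$ of rational functions having no pole at $\alpha$; this is a discrete valuation ring whose units are exactly the functions $f$ with $\nu_\alpha(f)=0$. Over $\mathcal{O}_\alpha$ every rational matrix admits a local Smith--McMillan form, and the two numbers $\delta_p(\cdot;\alpha)$ and $\delta_z(\cdot;\alpha)$ are invariant under pre- and post-multiplication by matrices invertible over $\mathcal{O}_\alpha$ (call them ``$\alpha$-units'') and under the adjunction of zero rows or columns.

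First I would write the local Smith--McMillan factorizations. Since $\rk(G)=r$ equals the number of columns of $G$, there are $\alpha$-units $U_G,V_G$ with
\begin{equation*}
	G = U_G^{-1}\begin{bmatrix}\Delta_G\\ 0\end{bmatrix}V_G^{-1},\qquad \Delta_G=\diag\left[(z-\alpha)^{g_1},\dots,(z-\alpha)^{g_r}\right],
\end{equation*}
and, since $\rk(H)=r$ equals the number of rows of $H$, there are $\alpha$-units $U_H,V_H$ with
\begin{equation*}
	H = U_H^{-1}\begin{bmatrix}\Delta_H & 0\end{bmatrix}V_H^{-1},\qquad \Delta_H=\diag\left[(z-\alpha)^{h_1},\dots,(z-\alpha)^{h_r}\right].
\end{equation*}
Here $g_1,\dots,g_r$ and $h_1,\dots,h_r$ are the structural indices of $G$ and $H$ at $\alpha$, so that $\delta_z(G;\alpha)-\delta_p(G;\alpha)=\sum_i g_i$ and $\delta_z(H;\alpha)-\delta_p(H;\alpha)=\sum_i h_i$ follow immediately from the diagonal forms.

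Next I would multiply the two factorizations and collapse the middle. Setting $M:=V_G^{-1}U_H^{-1}$, which is a \emph{square} $r\times r$ $\alpha$-unit precisely because both $G$ and $H$ are full-rank, a block computation gives
\begin{equation*}
	G(z)H(z)=U_G^{-1}\begin{bmatrix}N & 0\\ 0 & 0\end{bmatrix}V_H^{-1},\qquad N:=\Delta_G\, M\,\Delta_H .
\end{equation*}
Because $U_G^{-1}$ and $V_H^{-1}$ are $\alpha$-units and the appended blocks are zero, the local structure of $GH$ at $\alpha$ is that of the square matrix $N$, whence $\delta_p(GH;\alpha)=\delta_p(N;\alpha)$ and $\delta_z(GH;\alpha)=\delta_z(N;\alpha)$. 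Now I would apply the determinantal identity to $N$: since $\det N=\det\Delta_G\cdot\det M\cdot\det\Delta_H$ with $\det M$ a unit at $\alpha$, one gets $\delta_z(N;\alpha)-\delta_p(N;\alpha)=\nu_\alpha(\det N)=\sum_i g_i+\sum_i h_i$. Combining this with the two relations for $G$ and $H$, the individual sums cancel and I obtain
\begin{equation*}
	\big[\delta_z(G;\alpha)+\delta_z(H;\alpha)\big]-\delta_z(GH;\alpha)=\big[\delta_p(G;\alpha)+\delta_p(H;\alpha)\big]-\delta_p(GH;\alpha).
\end{equation*}
The left-hand side measures the amount of zero cancellation and the right-hand side the amount of pole cancellation; their equality shows that one is strictly positive if and only if the other is, which is exactly a zero-pole cancellation. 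The main obstacle I anticipate is not a deep step but the careful bookkeeping of the \emph{rectangular} local Smith--McMillan forms — verifying that the $\alpha$-units peel off on the correct sides, that the full-rank hypotheses are exactly what force $M$ to be a square invertible matrix (so $\det M$ is a unit), and that the zero padding leaves $\delta_p$ and $\delta_z$ unchanged — together with a clean justification of the identity $\delta_z-\delta_p=\nu_\alpha(\det)$ on which the whole argument hinges.
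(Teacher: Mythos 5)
Your proof is correct and follows essentially the same route as the paper's: both arguments use full rank to sandwich a square invertible ``middle'' factor between the (Smith--McMillan) diagonal forms of $G$ and $H$, then take determinants to obtain the identity $\delta_p(GH;\alpha)-\delta_p(G;\alpha)-\delta_p(H;\alpha)=\delta_z(GH;\alpha)-\delta_z(G;\alpha)-\delta_z(H;\alpha)$, from which the conclusion is immediate. The only difference is packaging — you localize at $\alpha$ over the valuation ring $\mathcal{O}_\alpha$, while the paper works with the global Smith--McMillan form and unimodular polynomial matrices — but the key determinantal identity and its use are the same.
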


\begin{proof}
Assume that $G(z)H(z)$ has a pole cancellation at $\alpha\in\C$ (the proof for the case of a zero cancellation at $\alpha\in\C$ goes along the same lines).

Let $D(z),\, D'(z)\in\C(z)^{r\times r}$ be the SM canonical form of $G(z)$, $H(z)$, respectively. We can write 
\[
	G(z)=C(z)D(z)F(z) \quad \text{and}\quad H(z)=C'(z)D'(z)F'(z)
\]
with $F(z),\,C'(z)\in\C[z]^{r\times r}$, $C(z)\in\C[z]^{n\times r}$ and $F'(z)\in\C[z]^{r\times m}$ unimodular matrices. Hence the product $G(z)H(z)$ can be written as
\[
	G(z)H(z)=C(z)D(z)F(z)C'(z)D'(z)F'(z)
\]
where $M(z):=F(z)C'(z)\in\C[z]^{r\times r}$ is unimodular. Notice that, by virtue of the unimodularity of $C(z)$ and $F'(z)$, the SM canonical form of $G(z)H(z)$, denoted by $\Delta(z)$, coincides with that of $D(z)M(z)D'(z)$ (see \cite[Ex.6.5-6]{Kailath}). Moreover observe that, since $\rk(G)=\rk(H)=r$, then $\rk(DMD')=r$. Therefore, by taking determinants, we have
\begin{align}\label{eq:deter}
	\det \Delta(z) 	&	=	c\det D(z)\det D'(z)\nonumber\\
						& 	=	c \frac{n(z)}{d(z)} \frac{(z-\alpha)^{\delta_z(G;\alpha)+\delta_z(H;\alpha)}}{(z-\alpha)^{\delta_p(G;\alpha)+\delta_p(H;\alpha)}}
\end{align}
with $n(z)$ and $d(z)$ relatively prime polynomials s.t. $n(\alpha)\neq 0$, $d(\alpha)\neq 0$, and $c\in\C, \ c\neq 0$. On the other hand, since $\Delta(z)$ is the SM canonical form of $G(z)H(z)$, we get
\begin{align}\label{eq:deter-2}
	\det \Delta(z) 	= c \frac{n(z)}{d(z)} \frac{(z-\alpha)^{\delta_z(GH;\alpha)}}{(z-\alpha)^{\delta_p(GH;\alpha)}}.
\end{align}
Hence, a comparison of (\ref{eq:deter}) and (\ref{eq:deter-2}) yields
\begin{align}\label{eq:zero-pole-canc}
	 & \delta_p(GH;\alpha)- \delta_p(G;\alpha) -\delta_p(H;\alpha) = \delta_z(GH;\alpha)- \delta_z(G;\alpha)-\delta_z(H;\alpha).
\end{align}
Since, by assumption, $G(z)H(z)$ has a pole cancellation at $\alpha$, the left-hand side of (\ref{eq:zero-pole-canc}) is strictly negative. This in turn implies that the right-hand side of (\ref{eq:zero-pole-canc}) is strictly negative, i.e. $G(z)H(z)$ has a zero cancellation at $\alpha$. From this fact the thesis follows.
\end{proof}

\begin{Lemma}\label{Lemma-sum-polar-degree}
Let $G(z)\in\C(z)^{n\times r}$ and $H(z)\in\C(z)^{r\times m}$ with $\rk(G)=\rk(H)=r$. If $G(z)$ and $H(z)$ have no zeroes at $\alpha\in\C$ then 
\[
	\delta_p(GH;\alpha)=\delta_p(G;\alpha)+\delta_p(H;\alpha).
\]
\end{Lemma}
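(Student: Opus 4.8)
The plan is to obtain this as an almost immediate consequence of Lemma~\ref{Lemma-zero-pole-cancellation}. The starting point is the standard fact (which is precisely what makes the notion of ``pole cancellation'' in the Definition meaningful) that the pole degree is subadditive under products, i.e. $\delta_p(GH;\alpha)\le\delta_p(G;\alpha)+\delta_p(H;\alpha)$, since every pole of the product at $\alpha$ is inherited from a pole of at least one factor and its order cannot exceed the sum of the two orders. Under this standing bound, the equality asserted by the Lemma fails if and only if the inequality is strict, which is by definition the occurrence of a \emph{pole cancellation} at $\alpha$. Hence it suffices to show that $G(z)H(z)$ has no pole cancellation at $\alpha$.

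To do so I would argue by contradiction. Suppose $G(z)H(z)$ did have a pole cancellation at $\alpha$. Because $\rk(G)=\rk(H)=r$, the hypotheses of Lemma~\ref{Lemma-zero-pole-cancellation} are met, so this pole cancellation is necessarily a zero-pole cancellation; in particular $G(z)H(z)$ also exhibits a zero cancellation at $\alpha$, that is $\delta_z(GH;\alpha)<\delta_z(G;\alpha)+\delta_z(H;\alpha)$. But the assumption that neither $G(z)$ nor $H(z)$ has a zero at $\alpha$ gives $\delta_z(G;\alpha)=\delta_z(H;\alpha)=0$, so the right-hand side vanishes and we would be forced to conclude $\delta_z(GH;\alpha)<0$, contradicting the nonnegativity of the zero degree. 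Therefore no pole cancellation occurs and $\delta_p(GH;\alpha)=\delta_p(G;\alpha)+\delta_p(H;\alpha)$.

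A slightly more direct route, which avoids the contradiction, is to read the conclusion off the identity \eqref{eq:zero-pole-canc} derived within the proof of Lemma~\ref{Lemma-zero-pole-cancellation}: that identity holds verbatim here since it only uses $\rk(G)=\rk(H)=r$, and substituting $\delta_z(G;\alpha)=\delta_z(H;\alpha)=0$ yields $\delta_p(GH;\alpha)-\delta_p(G;\alpha)-\delta_p(H;\alpha)=\delta_z(GH;\alpha)\ge 0$, which together with the subadditivity bound ($\le 0$) forces equality. I expect no genuine obstacle in this proof: once Lemma~\ref{Lemma-zero-pole-cancellation} is available the result is essentially a corollary, and the only points requiring a line of care are the (standard) subadditivity of the pole degree and the restriction to a \emph{finite} $\alpha\in\C$, which keeps the Smith-McMillan/determinant bookkeeping of the previous lemma directly applicable without invoking the $z\mapsto z^{-1}$ reduction used for the point at infinity.
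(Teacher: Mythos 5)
Your reduction to Lemma \ref{Lemma-zero-pole-cancellation} is a genuinely different, and shorter, route than the paper's, and the logic is sound except for one load-bearing step that you treat as free: the subadditivity bound $\delta_p(GH;\alpha)\le\delta_p(G;\alpha)+\delta_p(H;\alpha)$. Both of your variants need it (the first to convert ``no pole cancellation'' into equality, the second to turn $\delta_p(GH;\alpha)-\delta_p(G;\alpha)-\delta_p(H;\alpha)=\delta_z(GH;\alpha)\ge 0$ into an equality). The bound is true, but it is neither stated nor proved anywhere in the paper, and the justification you offer --- that a pole of the product is inherited entrywise from a pole of a factor with no larger order --- does not establish it: $\delta_p(\,\cdot\,;\alpha)$ is the \emph{sum} of the pole orders over the Smith--McMillan invariant factors, not the largest entrywise pole order (e.g.\ $\diag[(z-\alpha)^{-1},(z-\alpha)^{-1}]$ has every entry of pole order at most one but local pole degree two). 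A genuine proof of subadditivity needs something like the characterization of $\delta_p(G;\alpha)$ as the maximal pole order at $\alpha$ over all minors of $G$ together with Cauchy--Binet, which is standard but is real extra machinery not available in the paper.

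The paper closes exactly this gap by a different observation, which you could adopt without otherwise changing your argument: instead of invoking subadditivity, show $\delta_z(GH;\alpha)=0$. With $\Delta(z)$ the SM form of $G(z)H(z)$, which coincides with that of $D(z)M(z)D'(z)$ for $M(z)$ unimodular, one notes that $\Delta^{-1}(z)=D'^{-1}(z)M^{-1}(z)D^{-1}(z)$ has no pole at $\alpha$ because each factor is analytic there (here is where the hypothesis that $G$ and $H$ have no zeroes at $\alpha$ enters), so $GH$ has no zero at $\alpha$. Substituting $\delta_z(GH;\alpha)=\delta_z(G;\alpha)=\delta_z(H;\alpha)=0$ into (\ref{eq:zero-pole-canc}) then yields the claimed equality with no appeal to subadditivity; the paper phrases this via pole orders of determinants rather than via (\ref{eq:zero-pole-canc}), but it is the same computation. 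Your closing remark about confining attention to finite $\alpha$ is correct and matches the paper's treatment of $\alpha=\infty$ by a change of variable.
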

\begin{proof}
By following verbatim the first part of the proof of Lemma \ref{Lemma-zero-pole-cancellation}, we arrive at the expression
\begin{align}\label{eq:deter1}
	\det\Delta(z) & = \det D(z) \det M(z) \det D'(z) \nonumber\\ 
					   & = c\det D(z)\det D'(z), \quad c\in \C,\ c\neq 0.
\end{align}
Since by assumption $G(z)$ and $H(z)$ have no zero at $\alpha$, then $D(z)$ and $D'(z)$ have no zero at $\alpha$. Furthermore, $\Delta(z)$ has no zero at $\alpha$. This fact can be seen by taking the inverse of $\Delta(z)$, namely 
\[
	\Delta^{-1}(z)=D'^{-1}(z)M^{-1}(z)D^{-1}(z),
\] 
and by noting that the latter has no pole at $\alpha$, since the entries of $D^{-1}(z)$, $M^{-1}(z)$ and $D'^{-1}(z)$ do not have any pole at $\alpha$. This in turn implies that $\delta_p(D;\alpha)$, $\delta_p(D';\alpha)$ and $\delta_p(\Delta;\alpha)$ coincide with the degree of the pole at $\alpha$ in $\det D(z)$, $\det D'(z)$ and $\det \Delta(z)$, respectively. Hence, by summing up all the previous considerations, we get
\begin{align*}
	\delta_p(GH;\alpha)	& = \delta_p(\Delta;\alpha) \\
					& = \delta_p(\det \Delta;\alpha) \\
					& \stackrel{(\ref{eq:deter1})}{=} \delta_p(\det D;\alpha)+\delta_p(\det D';\alpha) \\
					& = \delta_p(D;\alpha)+\delta_p(D';\alpha) \\
					& = \delta_p(G;\alpha)+\delta_p(H;\alpha) 
\end{align*}
which concludes the proof.
\end{proof}

\begin{Remark}
Notice that Lemmata \ref{Lemma-zero-pole-cancellation} and \ref{Lemma-sum-polar-degree} still hold when $\alpha=\infty$. As a matter of fact, in this case, we can apply the change of variable $z\mapsto \lambda^{-1}$ and then consider the (degree of the) zero/pole at $\lambda=0$ in $G(\lambda)$ and $H(\lambda)$. 
\end{Remark}

\section{The main theorem}

Before proceeding with the proof of the main Theorem, we introduce some preliminary definitions.

\begin{Definition}[Spectrum]
A para-Hermitian rational matrix $\Phi(z)\in\mathbb{R}(z)^{n\times n}$ is said to be a spectrum if $\Phi(e^{j\omega})$ is positive semi-definite for all $\omega\in[0,2\pi)$ such that $\Phi(e^{j\omega})$ is defined.
\end{Definition}

\begin{Definition}[(Stochastically minimal) Spectral factor]
Given a spectrum $\Phi(z)\in\mathbb{R}(z)^{n\times n}$, a real matrix-valued function $W(z)$ satisfying
\[
	\Phi(z)=W^*(z)W(z),
\] 
is called a spectral factor of $\Phi(z)$. Moreover, the spectral factor $W(z)$ is said to be stochastically minimal if 
\[
	\delta_{M}(W)=\frac{1}{2}\delta_{M}(\Phi).
\]
\end{Definition}

Stochastically minimal spectral factors correspond to solutions of minimal complexity (i.e. minimal McMillan degree) of the spectral factorization problem. Indeed, a spectral factor $W(z)$ of $\Phi(z)$ cannot have McMillan degree smaller than $\frac{1}{2}\delta_{M}(\Phi)$.

\begin{Definition}[(Weakly) Unmixed-symplectic]\label{def:unmixed-symplectic}
A set $\mathscr{A}\subset \overline{\C}$ is unmixed-symplectic if 
\[
	\mathscr{A}\cup \mathscr{A}^*=\overline{\C}\setminus \{\,z\in\C\,:\, |z|=1\,\},\ \ \text{and}\ \ 
	\mathscr{A}\cap \mathscr{A}^*=\emptyset,
\]
where $\mathscr{A}^*=\{\,z\,:\, z^{-1}\in\mathscr{A}\,\}$.
The set $\mathscr{A}\subset \overline{\C}$ is weakly unmixed-symplectic if 
\[
	\mathscr{A}\cup \mathscr{A}^*=\overline{\C},\ \ \text{and}\ \ 
	\mathscr{A}\cap \mathscr{A}^*=\{\,z\in\C\,:\, |z|=1\,\},
\]
\end{Definition}

The following Theorem is the main result of this note.

\begin{Theorem}\label{thm1}
Let $\Phi(z)\in\mathbb{R}(z)^{n \times n}$ be a spectrum with $\rk(\Phi)=r\leq n$, $r \neq 0$. Let $W(z),W_1(z)\in\mathbb{R}(z)^{r\times n}$ be such that
\begin{enumerate}
\item $W(z)$ and $W_{1}(z)$ are spectral factors of $\Phi(z)$, i.e. $\Phi(z)=W^*(z)W(z)=W_1^*(z)W_1(z)$;
\item $W(z), W_1(z)$ are analytic in $\mathscr{A}_p$ and $W^{-R}(z),W_1^{-R}(z)$ are analytic in $\mathscr{A}_z$, where $\mathscr{A}_p$, $\mathscr{A}_z$ are weakly unmixed-symplectic regions;
\item $W(z)$ and $W_{1}(z)$ are stochastically minimal, i.e. $\delta_M(W)=\delta_M(W_1)=\frac{1}{2}\delta_M(\Phi)$.
\end{enumerate}
Then, $W_1(z)=TW(z)$ with $T\in\mathbb{R}^{r\times r}$ constant orthogonal.
\end{Theorem}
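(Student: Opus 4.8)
The plan is to reduce the statement to a claim about an all-pass factor and then to show that this factor must be constant by combining stochastic minimality with the symplectic geometry of the two regions.

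\emph{Step 1 (reduction to an all-pass factor).} Since $\rk(W)=\rk(W_1)=r$ and $\Phi=W^*W=W_1^*W_1$, the rows of $W$ and of $W_1$ span the same rational row space (that of $\Phi$), so $W_1=QW$ for a uniquely determined $Q(z)\in\C(z)^{r\times r}$ with $\rk(Q)=r$. Substituting into $W_1^*W_1=W^*W$ gives $W^*(Q^*Q-I_r)W=0$; cancelling the full-column-rank factor $W^*$ on the left and the full-row-rank factor $W$ on the right yields $Q^*Q=I_r$, i.e.\ $Q$ is paraconjugate-unitary. By Lemma~\ref{Lemma1}, $Q=U\,U_1(z)\cdots U_m(z)$ with $U$ constant unitary and $m=\delta_M(Q)$, each $U_i$ carrying a single pole $\alpha_i\in\overline{\C}\setminus\mathbb{T}$ and a single zero at $1/\overline{\alpha_i}$. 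The theorem then amounts to proving $m=0$, since $Q=U$ constant and real forces $T:=U$ to be real orthogonal.

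\emph{Step 2 (minimality pins down poles and zeros).} Stochastic minimality reads $\delta_M(\Phi)=2\delta_M(W)$. As $\delta_M(W^*W)\le\delta_M(W^*)+\delta_M(W)=2\delta_M(W)$, equality forces the product $W^*W$ to have no pole cancellation, hence, by Lemma~\ref{Lemma-zero-pole-cancellation} (applicable since $\rk(W^*)=\rk(W)=r$), no zero-pole cancellation at all. Now the poles of $W^*$ sit at the reflections $1/\overline{\beta}$ of the poles $\beta$ of $W$, and the symplectic structure places them on opposite sides of $\mathbb{T}$: the poles of $W$ lie in $\overline{\C}\setminus\mathscr{A}_p$ while those of $W^*$ lie in $\mathscr{A}_p$. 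These sets being disjoint, the absence of cancellation gives $\delta_p(W;\beta)=\delta_p(\Phi;\beta)$ for every $\beta\in\overline{\C}\setminus\mathscr{A}_p$, and likewise for $W_1$; thus $W$ and $W_1$ have exactly the same poles. An identical argument on zeros (the zeros of $W$ lying in $\overline{\C}\setminus\mathscr{A}_z$ and those of $W^*$ in $\mathscr{A}_z$) shows that $W$ and $W_1$ also have the same zeros. Feeding $\delta_p(W_1;\gamma)=\delta_p(W;\gamma)$ and $\delta_z(W_1;\gamma)=\delta_z(W;\gamma)$ into the determinantal identity~(\ref{eq:zero-pole-canc}) for $W_1=QW$ gives, at every $\gamma\in\overline{\C}$,
\begin{equation*}
	\delta_p(Q;\gamma)=\delta_z(Q;\gamma).
\end{equation*}
Together with the all-pass relation $\delta_z(Q;\gamma)=\delta_p(Q;1/\overline{\gamma})$, this forces the poles of $Q$ to occur in reflection-symmetric pairs $\{\alpha,1/\overline{\alpha}\}$ with a pole \emph{and} a zero of equal degree at each such point. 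This is exactly the genuinely matricial situation anticipated in the Introduction, where a scalar argument breaks down.

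\emph{Step 3 (symplectic contradiction).} Assume $m\ge 1$ and pick a pole $\alpha$ of $Q$; exchanging the roles of $(W,Q)$ and $(W_1,Q^*)$ if necessary, I may assume $\alpha\in\mathscr{A}_p$, so $1/\overline{\alpha}\notin\mathscr{A}_p$. Here the rank-one parametrization is essential: the pole of $Q$ at $\alpha$ lives in the directions $v_i$ of the factors with $\alpha_i=\alpha$, while the coincident zero at $\alpha$ lives in the directions of the factors with $\alpha_i=1/\overline{\alpha}$, and minimality of the factorization keeps these two direction sets independent, so the pole cannot be absorbed by the coincident zero inside $Q$. Since $W_1=QW$ and $W$ are both analytic on $\mathscr{A}_p\ni\alpha$, the pole of $Q$ must be cancelled by $W$, which forces $W$ to have a genuine zero at $\alpha$ aligned with the $v_i$, whence $\alpha\in\overline{\C}\setminus\mathscr{A}_z$. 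The reflected analysis at $1/\overline{\alpha}$, now using analyticity of $W_1^{-R}=W^{-R}Q^*$ on $\mathscr{A}_z$, forces in the same way $1/\overline{\alpha}\in\overline{\C}\setminus\mathscr{A}_z$. But $\mathscr{A}_z$ is weakly unmixed-symplectic, so for $\alpha\notin\mathbb{T}$ exactly one of $\alpha,1/\overline{\alpha}$ lies in $\mathscr{A}_z$ --- a contradiction. Hence $m=0$ and the theorem follows.

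\emph{Main obstacle.} I expect Step 3 to be the crux. Step 2 shows that any ambiguity is concentrated entirely in the case where $Q$ has a pole and a zero at the very same point, which is invisible in the scalar setting and is precisely the difficulty left open in~\cite{BF-2015}. The delicate part is to convert the informal ``independent directions'' statement into a rigorous cancellation count --- for which the rank-one factors of Lemma~\ref{Lemma1} are the natural tool --- and then to propagate the conclusion from $\alpha$ to its reflection $1/\overline{\alpha}$, so that the symplectic ``exactly one of $\alpha,1/\overline{\alpha}$ in $\mathscr{A}_z$'' property can finally be invoked.
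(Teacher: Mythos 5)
Your Steps 1 and 2 are sound, and Step 2 is a genuinely useful reformulation: stochastic minimality does force $\delta_p(W;\cdot)=\delta_p(\Phi;\cdot)$ off $\mathscr{A}_p$ and $\delta_z(W;\cdot)=\delta_z(\Phi;\cdot)$ off $\mathscr{A}_z$, hence $\delta_p(Q;\gamma)=\delta_z(Q;\gamma)$ everywhere, so the poles of $Q$ come in reflected pairs $\{\alpha,1/\overline{\alpha}\}$ carrying a pole \emph{and} a zero of equal degree at each point. The first half of Step 3 is also correct: if $\alpha\in\mathscr{A}_p$ is a pole of $Q$, then since $W_1=QW$ and $W$ are analytic at $\alpha$, $W$ must have a zero at $\alpha$ (otherwise $W^{-R}$ would be analytic at $\alpha$ and $Q=(QW)W^{-R}$ could not have a pole there), whence $\alpha\notin\mathscr{A}_z$.

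The gap is the second half of Step 3. The ``reflected analysis'' at $1/\overline{\alpha}$ does not produce $1/\overline{\alpha}\notin\mathscr{A}_z$. Two problems. First, $W^{-R}Q^*$ is only \emph{a} right inverse of $W_1$, not necessarily the minimal one whose analyticity in $\mathscr{A}_z$ is hypothesized, so you cannot simply write $W_1^{-R}=W^{-R}Q^*$. Second, and more fundamentally, mirroring your argument at $1/\overline{\alpha}$ can only tell you that the pole of $Q^*$ there must be absorbed by a pole of $W$ at $1/\overline{\alpha}$, i.e., that $1/\overline{\alpha}\notin\mathscr{A}_p$ --- which you already know and which contradicts nothing. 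The configuration ``$\alpha\in\mathscr{A}_p\setminus\mathscr{A}_z$ with $W$ having a zero there; $1/\overline{\alpha}\in\mathscr{A}_z\setminus\mathscr{A}_p$ with $W$ having a pole there'' is compatible with both region constraints and with every degree identity available from your Steps 1--2 (the identity (\ref{eq:zero-pole-canc}) at either point reduces to $-d=-d$). So the contradiction you aim at, namely a violation of weak unmixed-symplecticity of $\mathscr{A}_z$, is not reachable by this route. The paper's contradiction is of a different nature: it peels off a \emph{single} rank-one factor $U_{\alpha,1}$ from the Alpay--Gohberg factorization and proves, via the directional (first-column/first-row) argument and its dual for right inverses, that the product $U_{\alpha,1}W$ has a zero-pole cancellation at \emph{both} $\alpha$ and $1/\overline{\alpha}$; hence $U_{\alpha,1}W$ is a spectral factor with $\delta_M(U_{\alpha,1}W)<\delta_M(W)=\frac{1}{2}\delta_M(\Phi)$, which is impossible. (Poles of $Q$ in $\mathscr{A}_z\setminus\mathscr{A}_p$ are then excluded separately by showing $\delta_M(W_1)>\delta_M(W)$ via Lemma \ref{Lemma-sum-polar-degree}.) You gesture at the rank-one directions but never set up this McMillan-degree count, which is the actual mechanism of the proof; without it the argument does not close.
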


\begin{proof}
Before illustrating the details of the proof, we outline the key steps in order to provide a road-map that may help the reader. 
\begin{enumerate}[i)]
\item We consider the para-unitary function $T(z)$ satisfying $W_1(z)=T(z)W(z)$ and we show that $T(z)$ must have no poles and zeroes in the region $\mathscr{A}_{p}\cap \mathscr{A}_{z}$. 
\item We then assume by contradiction that $T(z)$ is non-constant and, more precisely, that $T(z)$ possesses poles both in $\mathscr{A}_{p}\setminus\mathscr{A}_z$ and in $\mathscr{A}_{z}\setminus\mathscr{A}_p$. 
\item We decompose $T(z)$ according to Lemma \ref{Lemma1} and, by exploiting the properties of this decomposition, we show that for each pole $\alpha\in\mathscr{A}_{p}\setminus\mathscr{A}_z$ of $T(z)$ there is a zero-pole cancellation both at $\alpha$ and at $1/\overline{\alpha}$ in the product $T(z)W(z)$. Hence, we arrive at the contradiction that there exists a spectral factor of $\Phi(z)$, say $\tilde{W}(z)$, such that $\delta_{M}(\tilde{W})<\frac{1}{2}\delta(\Phi)$. Since this is not possible, we conclude that $T(z)$ must have no poles in the region $\mathscr{A}_{p}\setminus\mathscr{A}_z$.
\item Finally, we exploit the fact that, by point 3), $W(z)$ and $W_{1}(z)$ are stochastically minimal spectral factors to conclude that $T(z)$ must have no poles in the region $\mathscr{A}_{z}\setminus\mathscr{A}_p$. This implies that $T(z)$ is a constant and orthogonal matrix.
\end{enumerate}

We now describe the details. Consider the matrix
\[
T(z):=W_1(z)W^{-R}(z).
\]
By taking into account Property 1), it is immediate to see that
$T^*(z)T(z)=I$, i.e. that $T(z)$  is para-unitary. Moreover, since the inverse of $T(z)$ is given by
\[
T^*(z)=T^{-1}(z)=W(z)W_1^{-R}(z),
\]
it follows that $T(z)$ is analytic with its inverse in $\mathscr{A}_z\cap \mathscr{A}_p$.
Now observe that 
\begin{align}\label{eq:W_1TW}
W_1(z)=T(z)W(z).
\end{align}
To see this, set $Z(z):=W_1(z)-T(z)W(z)$.
By employing again Property 1), it is immediate to see that
$Z^*(z)Z(z)=0$ so that $Z(z)$ is identically zero in the unit circle and, eventually, $Z(z)=0$.
We need to show that $T(z)$ is constant.

Assume, ab absurdo, that $T(z)$ has McMillan degree $d$ with poles $\alpha_1,\dots,\alpha_n$ of degree $m_1,\dots,m_n$ ($d=m_1+\cdots+m_n$), respectively, s.t. $\alpha_1,\dots,\alpha_t\in\mathscr{A}_p\setminus\mathscr{A}_z$ and $\alpha_{t+1},\dots,\alpha_n\in\mathscr{A}_z\setminus\mathscr{A}_p$. In what follows we assume that $\alpha_{i}\neq \infty$ for $i=1,\dots,n$. As a matter of fact, if this is not the case, we can always find a suitable M\"obius transformation $z\mapsto f(z)$ such that $T(f(z))$ has only finite poles. Thus, by considering this transformation, the argument in the proof still applies.

By exploiting Lemma \ref{Lemma1}, we can decompose $T(z)$ as
\begin{align}\label{eq:T(z)}
T(z)  =  & \  U U_{\alpha_1,1}(z)\cdots U_{\alpha_1,m_1-1}(z)U_{\alpha_n}(z)\cdots U_{\alpha_2}(z) U_{\alpha_1,m_1}(z),
\end{align}
with $U\in\C^{r\times r}$ constant unitary and
\begin{align}
U_{\alpha_i,j}(z)&:=I_r+\left(\frac{1-\overline{\alpha}_iz}{z-\alpha_i}-1\right)P_{i,j},\label{eq:Ualphaij}\\
U_{\alpha_i}(z)&:=U_{\alpha_i,1}(z)\cdots U_{\alpha_i,m_i}(z), \label{eq:Ualphai}
\end{align}
with $i=1,\dots,n, \ j=m_1,\dots,m_n$, and $P_{i,j}\in\C^{r\times r}$ being an orthogonal rank-one projection.

Now, we can rearrange (\ref{eq:W_1TW}) in the form
\begin{align}\label{eq:Urearrange}
& U_{\alpha_1,m_1-1}^*(z)\cdots U_{\alpha_1,1}^*(z)U^*W_1(z) = U_{\alpha_n}(z)\cdots U_{\alpha_2}(z) U_{\alpha_1,m_1}(z)W(z).
\end{align}
Notice that the left-hand side of (\ref{eq:Urearrange}) is analytic in $\mathscr{A}_p\setminus\mathscr{A}_z$ with (right) inverse analytic in $\mathscr{A}_z\setminus\mathscr{A}_p$. It follows that the right-hand side of (\ref{eq:Urearrange}) must be analytic in $\mathscr{A}_p\setminus\mathscr{A}_z$ with (right) inverse analytic in $\mathscr{A}_z\setminus\mathscr{A}_p$. %Suppose $\alpha_1\neq \infty$,\footnote{If $\alpha_1=\infty$, the argument still holds by considering a suitable M\"obius transformation of $z$, e.g., $z\mapsto\lambda^{-1}$.} 
By rewriting the right-hand side of (\ref{eq:Urearrange}) in a more explicit way, we obtain
\begin{align*}
U_{\alpha_n}(z)\cdots U_{\alpha_2}(z) U_{\alpha_1,m_1}(z)W(z)&\stackrel{(\ref{eq:Ualphaij})}{=}U_{\alpha_n}(z)\cdots U_{\alpha_2}(z)\left(I_r-P_{1,m_1} +\frac{1-\overline{\alpha}_1z}{z-\alpha_1}P_{1,m_1}\right)W(z)\\
&=U_{\alpha_n}(z)\cdots U_{\alpha_2}(z)\frac{1-\overline{\alpha}_1z}{z-\alpha_1}P_{1,m_1}W(z) +\Delta(z)
\end{align*}
where $\Delta(z):=U_{\alpha_n}(z)\cdots U_{\alpha_2}(z)(I_r-P_{1,m_1})W(z)$ has no pole at $\alpha_1$. In fact, $\alpha_1\in\mathscr{A}_p\setminus\mathscr{A}_z$ so that $W(z)$ does not have a pole at $\alpha_1$. The minimality of the factorization of $T(z)$ in (\ref{eq:T(z)}) implies that
\[
\left(\frac{1-\overline{\alpha}_1z}{z-\alpha_1}P_{1,m_1}\right)W(z)
\] 
must have a zero-pole cancellation at $\alpha_1$. This fact needs a detailed explanation.

First, define
$U_{\mathrm{res}}:= U_{\alpha_n}(z)\cdots U_{\alpha_2}(z)$ and
 notice that, since the factorization of $T(z)$
in (\ref{eq:T(z)}) is minimal, the matrix 
\begin{align*}
R(z)&:={U}_{\mathrm{res}}(z)\frac{1-\overline{\alpha}_1z}{z-\alpha_1}P_{1,m_1}
\end{align*}
has a pole at $\alpha_1$. In fact, a pole cancellation at $\alpha_1$ in $R(z)$ would imply a pole cancellation at $\alpha_1$ in $U_{\alpha_n}(z)\cdots U_{\alpha_2}(z)U_{\alpha_1,m_1}(z)$, yielding that the degree of the pole $\alpha_1$ in $T(z)$ is less than $m_1$. However, this is not possible since, by Lemma \ref{Lemma1}, the factorization in (\ref{eq:T(z)}) is minimal.
Now, 
since $P_{1,m_1}$ is an orthogonal rank-one projection, there exists a unitary matrix $Q\in\C^{r\times r}$ such that
\[
Q^* P_{1,m_1}Q = \mathrm{diag}[1,0,\dots,0].
\]
Since $Q$ is constant and nonsingular, also 
\begin{align}\label{eq:R'}
\tilde{R}(z)&:=R(z)Q={U}_{\mathrm{res}}(z)\frac{1-\overline{\alpha}_1z}{z-\alpha_1}QQ^*P_{1,m_1}Q\notag\\
&={U}_{\mathrm{res}}(z)Q\mathrm{diag}\left[\frac{1-\overline{\alpha}_1z}{z-\alpha_1},0,\dots,0\right]
\end{align}
has a pole at $\alpha_1$. More in detail,  at least one entry in the first column of $\tilde{R}(z)$ possesses a pole at $\alpha_1$, while all the other columns are identically zero.
Now consider
$
A(z):=R(z)W(z)$ 
\begin{align}\label{eq:diagT}
A(z)&:=R(z)W(z)=\tilde{R}(z)\tilde{W}(z),
\end{align}
where $\tilde{W}(z):=Q^*W(z).$
As already observed, $A(z)$ is analytic in $\mathscr{A}_p\setminus\mathscr{A}_z$.
Therefore, by taking into account that at least one entry in the first column of $\tilde{R}(z)$ possesses a pole at $\alpha_1$, while all the other columns are identically zero, it is immediate that every element in the first row of $\tilde{W}(z)$ has a zero at $\alpha_1$ or is identically zero. Then, 
\[
Q\,\mathrm{diag}\left[\frac{1-\overline{\alpha}_1z}{z-\alpha_1},0,\dots,0\right]\tilde{W}(z) =\left(\frac{1-\overline{\alpha}_1z}{z-\alpha_1}P_{1,m_1}\right) W(z)
\] 
has no pole at $\alpha_1$. This implies that also $U_{\alpha_1,m_1}(z)W(z)$ has no pole at $\alpha_1$ so that in the product $U_{\alpha_1,m_1}(z)W(z)$ there is a pole  cancellation at $\alpha_1$.
Eventually, since $U_{\alpha_1,m_1}(z)$ has full (column-)rank and $W(z)$ has full row-rank, by Lemma \ref{Lemma-zero-pole-cancellation}, we can conclude that in the product $U_{\alpha_1,m_1}(z)W(z)$ there is a zero-pole  cancellation at $\alpha_1$.

By replacing (\ref{eq:Urearrange}) with
\begin{align*}
&W_1^{-R}(z)U U_{\alpha_1}(z)\cdots U_{\alpha_1,m_1-1}(z)=W^{-R}(z)U^*_{\alpha_1,m_1}(z)U_{\alpha_2}^*(z)\cdots U_{\alpha_n}^*(z),
\end{align*}
we can repeat almost verbatim the previous argument in order to conclude that $W^{-R}(z)U^*_{\alpha_1,m_1}(z)$ must have a zero-pole cancellation at $1/\overline{\alpha}_1$, or, equivalently, $U_{\alpha_1,m_1}(z)W(z)$ must have a zero-pole cancellation at $1/\overline{\alpha}_1$.

The zero-pole cancellations at $\alpha_1$ and at $1/\overline{\alpha}_1$ in the product $U_{\alpha_1,m_1}(z)W(z)$ imply that 
\[
\delta_M(U_{\alpha_1,m_1}W)<\delta_M(W)=\frac{1}{2}\delta_M(\Phi).
\] 
Indeed, let $p_1,\dots,p_h\in\overline{\C}$ be the poles of $W(z)$ s.t. $p_i\neq 1/\overline{\alpha}_1$ for all $i=1,\dots,h$. Since $U_{\alpha_1,m_1}(z)$ is analytic together with its inverse in $\overline{\C}\setminus\{\alpha_1,1/\overline{\alpha}_1\}$, it holds 
\[
\delta_p(W;p_i)=\delta_p(U_{\alpha_1,m_1}W;p_i)
\]
 for all $i=1,\dots,h$.
 Moreover, by the zero-pole cancellations: (i) $U_{\alpha_1,m_1}(z)W(z)$ has no pole at $\alpha_1$, and (ii) $\delta_p(U_{\alpha_1,m_1}W;1/\overline{\alpha}_1)<\delta_p(W;1/\overline{\alpha}_1)$. Therefore 
\begin{align*}
\delta_M(U_{\alpha_1,m_1}W)&=\sum_{i=1}^h\delta_p(W;p_i) + \delta_p(U_{\alpha_1,m_1}W;1/\overline{\alpha}_1) \\
&<\sum_{i=1}^h\delta_p(W;p_i) + \delta_p(W;1/\overline{\alpha}_1)=\delta_M(W).
\end{align*}  But this is clearly not possible since, by point 3), $W(z)$ is a stochastically minimal spectral factor.  Therefore,  $U_{\alpha_1,m_1}(z)$ must be a constant unitary matrix.

The previous reasoning still applies for all the other factors of $T(z)$ having a pole at $\alpha_i$, $i=1,\dots,t$, yielding that $m_i=0$ for all $i=1,\dots,t$, i.e., $T(z)$ has no poles at $\alpha_i$, $i=1,\dots,t$.

It remains to show that $T(z)$ has no pole at $\alpha_{t+1},\dots,\alpha_n$. To this aim, 
%assume  that $\alpha_i\neq \infty$ for all $i=t+1,\dots,n$.\footnote{Note that if there exists $i$ s.t. $\alpha_i=\infty$, then we can always find a suitable M\"obius transformation $z\mapsto f(z)$ s.t. $T(f(z))$ has only finite poles. Thus, by considering this transformation, the argument still applies.} 
we have
$$
W_1(z) = T(z) W(z)
$$
and since all the poles of $T(z)$ lie in $\mathscr{A}_z\setminus \mathscr{A}_p$, by Lemma \ref{Lemma-sum-polar-degree}, we have $\delta_p(W_1;\alpha_i)=\delta_p(T;\alpha_i)+\delta_p(W;\alpha_i)$ for all $i=t+1,\dots,n$, while for all the other poles $p_i$, $i=1,\dots,h$, of $W(z)$, $\delta_p(W_1;p_i)=\delta_p(W;p_i)$. 
This implies that
\begin{align*}
\delta_M(W_1)&=\sum_{i=t+1}^n\delta_p(T;\alpha_i)+\sum_{i=t+1}^n\delta_p(W;\alpha_i)+ \sum_{i=1}^h\delta_p(W;p_i)\\
&>\sum_{i=t+1}^n\delta_p(W;\alpha_i)+\sum_{i=1}^h\delta_p(W;p_i)\\
&=\delta_M(W),
\end{align*}
which, by virtue of the stochastic minimality of $W_1(z)$, leads to a contradiction. Hence $T(z)$ must have no poles at $\alpha_{t+1},\dots,\alpha_n$.

To conclude, we have shown that $T(z)$ has no poles and hence no zeroes, due to the fact that $T(z)$ is a para-unitary matrix. Therefore, since it has real entries, $T(z)$ must be a constant orthogonal matrix.
\end{proof}

\section{Conclusions}

In this paper we have analyzed uniqueness of the solution of spectral factorization problem with prescribed dynamical features. 
If we restrict attention to solutions of minimal complexity, the solution is indeed essentially unique. 
The proof is based on the parametrization of discrete-time all-pass functions provided in \cite{unitary2} and on some preliminary results on rational matrix functions that we have established and that may be of independent interest.

We believe that similar techniques may be employed to derive the continuous-time counterpart of this result. Indeed, in \cite{unitary2} a parametrization of continuous-time all-pass functions is also provided and the rest of the procedure appears to be adaptable to the continuous-time case as well.

\end{document}